% 2021/12/01
\documentclass{amsart}
\usepackage{amsmath}
\usepackage{amssymb} 
\usepackage{amscd} 
\usepackage[dvipdfmx]{graphicx} 
\usepackage{epsfig}
\usepackage[dvips]{color}
\usepackage[all]{xy}  %%%%%%%%%%%%% For commutative diagram 

\newtheorem{theorem}{Theorem}[section]

\newtheorem{claim}[theorem]{Claim}

\newtheorem{remark}[theorem]{Remark}
\newtheorem{question}[theorem]{Question}

\theoremstyle{definition}

\numberwithin{equation}{section}
\numberwithin{figure}{section}
\numberwithin{table}{section}

\renewcommand{\(}{\textup{(}}
\renewcommand{\)}{\textup{)}}

\setcounter{tocdepth}{2} 

\begin{document}
\baselineskip 12pt

\title{Generalized torsion for hyperbolic $3$--manifold groups with arbitrary large rank}

\author[T.Ito]{Tetsuya Ito}
\address{Department of Mathematics, Kyoto University, Kyoto 606-8502, JAPAN}
\email{tetitoh@math.kyoto-u.ac.jp}
\thanks{The first named author has been partially supported by JSPS KAKENHI Grant Number JP19K03490, 21H04428.}

\author[K. Motegi]{Kimihiko Motegi}
\address{Department of Mathematics, Nihon University, 
3-25-40 Sakurajosui, Setagaya-ku, 
Tokyo 156--8550, Japan}
\email{motegi.kimihiko@nihon-u.ac.jp}
\thanks{The second named author has been partially supported by JSPS KAKENHI Grant Number JP19K03502, 21H04428, 
and Joint Research Grant of Institute of Natural Sciences at Nihon University for 2021.}

\author[M. Teragaito]{Masakazu Teragaito}
\address{Department of Mathematics and Mathematics Education, Hiroshima University, 
1-1-1 Kagamiyama, Higashi-Hiroshima, 739--8524, Japan}
\email{teragai@hiroshima-u.ac.jp}
\thanks{The third named author has been partially supported by JSPS KAKENHI Grant Number JP20K03587.}

\begin{abstract}
Let $G$ be a group and $g$ a non-trivial element in $G$. 
If some non-empty finite product of conjugates of $g$ equals to the trivial element,
then $g$ is called a \textit{generalized torsion element}. 
To the best of our knowledge, 
we have no hyperbolic $3$--manifold groups with generalized torsion elements whose rank is explicitly known to be greater than two.  
The aim of this short note is to demonstrate that for a given integer $n > 1$ there are infinitely many closed hyperbolic $3$--manifolds $M_n$ which enjoy the property: 
(i) the Heegaard genus of $M_n$ is $n$, 
(ii) the rank of $\pi_1(M_n)$ is $n$, 
and (ii) $\pi_1(M_n)$ has a generalized torsion element.  
Furthermore, we may choose $M_n$ as homology lens spaces and so that the order of the generalized torsion element is arbitrarily large.
\end{abstract}

\maketitle

{
\renewcommand{\thefootnote}{}
\footnotetext{2020 \textit{Mathematics Subject Classification.}
Primary 57K10, 57M05, 57M07, 57M50
Secondary 20F05, 20F60, 06F15
\footnotetext{ \textit{Key words and phrases.}
fundamental group, generalized torsion, Dehn filling, rank}
}

\section{Introduction}
\label{Introduction}
Let $G$ be a group and let $g$ be a non-trivial element in $G$. 
If some non-empty finite product of conjugates of $g$ equals to the identity, 
then $g$ is called a \textit{generalized torsion element}.
In particular, any non-trivial torsion element is a generalized torsion element. 
As a natural generalization of the order of torsion element, 
the \emph{order} of the generalized torsion element $g$ is defined as 
\[
\min\{ k \geq 2 \: | \: \exists x_{1},\ldots,x_k \in G \mbox{ s.t. } (x_1 g x_1^{-1})(x_2 g x_2^{-1}) \cdots(x_{k}g x_{k}^{-1})=1\}. 
\]

A group $G$ is said to be \textit{bi-orderable\/} if $G$ admits
a strict total ordering $<$ which is invariant under multiplication from
the left and right.
That is, if $g<h$, then $agb<ahb$ for any $g,h,a,b\in G$.
In this paper, the trivial group $\{1\}$ is considered to be bi-orderable.

It is easy to see that a bi-orderable group does not have a generalized torsion element. 
Thus the existence of generalized torsion element is an obstruction for a group to be bi-orderable.
It is known that the converse does not hold in general \cite[Chapter 4]{MR}. 
However, for $3$--manifold groups (fundamental groups of $3$--manifolds), 
one may expect that the converse does hold, 
and in \cite{MT_generalized_torsion} the authors proposed the conjecture: 
a $3$--manifold group is bi-orderable if and only if it has no generalized torsion element. 

The conjecture holds for Seifert fiber spaces and Sol manifolds \cite{MT_generalized_torsion}. 
In particular, 
there is a Seifert fiber space $M$ such that 
$\pi_1(M)$ has a generalized torsion element and the rank of  $\pi_1(M)$ is arbitrarily large. 

Very recently Sekino \cite{S} proves that the conjecture holds for once punctured torus bundles. 
In particular, he gives tunnel number two once punctured torus bundles whose fundamental group has a generalized torsion element.

Furthermore, 
since taking connected sum or gluing along incompressible tori preserves generalized torsion, 
we may also construct non-Seifert fibered $3$--manifolds $M$ so that  
$\pi_1(M)$ has a generalized torsion element and arbitrarily large rank. 
See also \cite{IMT_PAMS} for a behavior of generalized torsion elements under 
prime decomposition and torus decomposition. 

Let us restrict our attention to hyperbolic $3$--manifold groups. 
Two-generator, one-relator hyperbolic $3$--manifold groups with generalized torsion elements are given in \cite{NR,Te,Tera_link,MT_generalized_torsion_high_genus}. 
On the other hand, 
to the best of our knowledge, 
we have no hyperbolic $3$--manifold groups with generalized torsion elements 
whose rank is explicitly known to be greater than two. 
%(We expect that a tunnel number two once punctured torus bundle in \cite{S} has the fundamental group of rank three, 
%but we have not checked this.)

From a viewpoint of combinatorial group theory, it seems to be difficult to handle groups with more than two generators. 
The aim of this short note is to demonstrate the following  
by using a geometric interpretation of generalized torsion elements. 

\begin{theorem}
\label{g-torsion_larga_rank}
For a given integer $n > 1$ there are infinitely many closed hyperbolic $3$--manifolds 
\(precisely homology lens spaces\) $M_{n}$ which enjoy the property: 
 \({\rm i}\)\,the Heegaard genus of $M_{n}$ is $n$, 
 \({\rm ii}\)\,the rank of $\pi_1(M_{n})$ is $n$, 
and  \({\rm iii}\)\,$\pi_1(M_{n})$ has a generalized torsion element. 
Furthermore, we may choose $M_n$ so that the order of the generalized torsion element is arbitrarily large. 
\end{theorem}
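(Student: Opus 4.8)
The plan is to build $M_n$ by Dehn surgery on a carefully chosen link in a homology sphere, starting from a two-bridge knot or small knot $K$ whose exterior already carries a generalized torsion element in its fundamental group, and then attaching $n-1$ extra one-handle-like pieces (e.g.\ performing surgery on an $(n-1)$-component unknotted family of ``satellite'' circles, or taking a suitable chain link) to push the Heegaard genus and the rank up to exactly $n$ while preserving hyperbolicity and the homology-lens-space condition. The geometric interpretation of generalized torsion alluded to in the introduction is the key tool: an element $g\in\pi_1(M)$ is a generalized torsion element of order at most $k$ precisely when a certain configuration of $k$ parallel copies of a loop representing $g$, with appropriate conjugating paths, bounds in a way detected by the (non)-left-orderability / non-bi-orderability data, and more concretely such elements persist under Dehn filling as long as the filling slope avoids a controlled exceptional set. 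So the first step is to fix the ambient construction: take a hyperbolic link $L=L_0\cup L_1\cup\dots\cup L_{n-1}$ in $S^3$ where $L_0$ is (a copy of) a knot known to yield generalized torsion after a suitable surgery, and $L_1,\dots,L_{n-1}$ are auxiliary components each meridionally linking $L_0$ once, chosen so that infinitely many Dehn fillings on them give homology lens spaces.

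Second, I would verify the three numerical properties. For (i) and (ii): after filling, $M_n$ inherits a Heegaard splitting of genus $\le n$ from a genus-$n$ Heegaard surface for the link exterior that one arranges by construction (each auxiliary component contributes one handle); and the lower bound $\operatorname{rank}\pi_1(M_n)\ge n$ — hence also Heegaard genus $\ge n$, since Heegaard genus bounds rank from above — is the delicate point, to be obtained either from a homological argument (arrange $H_1(M_n;\mathbb{Z}/p)$ or a suitable twisted homology to have rank $n$ for infinitely many fillings, forcing $\operatorname{rank}\pi_1\ge n$) or from known rank-versus-genus results for the specific family. For the homology-lens-space condition one solves a linear system over $\mathbb{Z}$ in the surgery coefficients so that $H_1(M_n)$ is cyclic, and one checks this can be done with infinitely many coefficient choices. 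Hyperbolicity of $M_n$ for all but finitely many fillings on each component follows from Thurston's hyperbolic Dehn surgery theorem applied to the hyperbolic link exterior.

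Third, for the generalized torsion element and the ``arbitrarily large order'' clause: I would take the generalized torsion element $g$ living in $\pi_1$ of the exterior of $L_0$ (or of the partially filled manifold) and track its image in $\pi_1(M_n)$. The relation $(x_1gx_1^{-1})\cdots(x_kgx_k^{-1})=1$ is preserved under the filling epimorphism, so $g$ remains generalized torsion (provided its image stays non-trivial, which one guarantees by choosing fillings where $g$ still has, say, nontrivial image in a finite quotient). To make the order arbitrarily large, I would invoke the mechanism already exploited for two-generator examples in \cite{MT_generalized_torsion_high_genus, Te}: choose $L_0$ (equivalently, choose the surgery coefficient on $L_0$) from a family parametrized so that the order of the generalized torsion element grows — typically the order is bounded below by something like the surgery slope or by the order of an associated torsion/linking pairing, and one reads this off from a left-ordering obstruction or from the ``generalized torsion polynomial''-type invariant.

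The main obstacle I anticipate is not the existence or persistence of the generalized torsion element — that is robust under surjections — but pinning down the rank of $\pi_1(M_n)$ \emph{exactly} equal to $n$: the upper bound $\le n$ from an explicit Heegaard splitting is routine, whereas proving $\ge n$ requires a genuine lower-bound technique (homology with twisted or mod-$p$ coefficients, or Kneser–Milnor/Grushko-type obstructions adapted to the hyperbolic setting, or an appeal to the result that for these surgery families rank equals Heegaard genus) that must be made to hold for \emph{infinitely many} of the fillings simultaneously with hyperbolicity and the homology-lens-space constraint. Balancing these three constraints — large mod-$p$ homology rank to force $\operatorname{rank}\ge n$, cyclic integral homology, and avoidance of the finite exceptional filling set — is where the real work lies, and I would resolve it by a counting argument showing the ``good'' slopes form a set of positive density in each cusp's filling lattice.
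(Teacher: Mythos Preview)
Your proposal has a genuine gap precisely where you flag the difficulty: the lower bound $\operatorname{rank}\pi_1(M_n)\ge n$. Your primary suggestion is to arrange $H_1(M_n;\mathbb{Z}/p)$ to have rank $n$, but this is incompatible with the homology-lens-space requirement: if $H_1(M_n;\mathbb{Z})$ is cyclic then $H_1(M_n;\mathbb{Z}/p)$ has rank at most $1$ for every prime $p$, so abelian homology can never witness rank $\ge 2$. The fallbacks you list---``a suitable twisted homology'' or ``known rank-versus-genus results for the specific family''---are not methods until you name the representation or the theorem; as written, the rank bound is simply absent. A secondary issue is that the link (unknots meridionally linking $L_0$ once) is not pinned down enough to control Heegaard genus or even to guarantee a hyperbolic exterior.

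The paper's argument is both architecturally different and much more concrete. It performs surgery on a \emph{single} Montesinos knot $K_n=M(\tfrac{5}{3},\dots,\tfrac{5}{3},\tfrac{17}{6})$ with $n$ rational tangles, not on a link. The equalities $g(K_n(p/q))=\operatorname{rank}\pi_1(K_n(p/q))=n$ come, for every even $p$, directly from the Lustig--Moriah theorem on generalized Montesinos knots (which uses Nielsen equivalence and $\mathcal{N}$-torsion, not abelian homology; the hypothesis $\gcd(\alpha_1,\dots,\alpha_n)=3\ne 1$ is what makes it apply). The generalized torsion element is the image of the meridian: $K_n$ visibly bounds a $(2n+4,0)$--coherent clasp disk, and Theorem~\ref{theorem:g_torsion_span_disk} then yields a generalized torsion element of order exactly $p$ whenever $p/q\ge 2n+4$, so ``arbitrarily large order'' is automatic by taking $p$ large. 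Hyperbolicity of $K_n$ follows from Oertel's classification, and hyperbolicity of the fillings from Thurston (indeed from Wu for $n\ge 4$, where \emph{all} fillings are hyperbolic). No counting or density argument is needed.
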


\section{Singular spanning disk and generalized torsion}

We recall a useful construction to produce a generalized torsion element. 
A \emph{singular spanning disk} of a knot $K$ in $S^{3}$ is a smooth map 
$\Phi:D^{2} \rightarrow S^{3}$ (or, its image) such that $\Phi(\partial D) = K$ and that $K$ intersects $\Phi(\mathrm{int}D)$ transversely in finitely many points. 
Each intersection point $\Phi(\mathrm{int}D \cap K)$ has a sign according to the orientations. 
We call $\Phi(D)$ a $(p, q)$--\emph{singular spanning disk} if 
$K$ intersects $\Phi(\mathrm{int}D)$ positively in $p$ points and negatively in $q$ points.

For a knot $K \subset S^3$, 
we denote the $3$--manifold obtained by $p/q$--surgery on $K$ by $K(p/q)$. 
Without loss of generality, we may assume $p \ge 0$. 

We recall the following result \cite{IMT_filling}. 

\begin{theorem}
\label{theorem:g_torsion_span_disk}
Let $K$ be a knot in $S^3$. 
If $K$ has a $(n, 0)$--singular spanning disk, 
then the image of a meridian $\mu$ in $\pi_1(K(p\slash q))$ is a generalized torsion element of order $p$ whenever $\frac{p}{q} \ge n$. 
Similarly, if $K$ has a $(0, n)$--singular spanning disk,  
then the image of a meridian $\mu$ in $\pi_1(K(p\slash q))$  is a generalized torsion element  of order $p$ whenever $\frac{p}{q}\le -n$. 
\end{theorem}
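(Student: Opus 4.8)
The plan is to turn the singular spanning disk into an algebraic relation in the knot group, and then to read off from it both an explicit generalized-torsion relation of length $p$ and a homological lower bound matching it. Throughout, write $E(K)=S^{3}\setminus\mathrm{int}\,\nbhd(K)$ for the exterior, and let $\mu,\lambda$ denote a meridian and the preferred (Seifert-framed) longitude on $\partial\nbhd(K)$, so that $\mu$ and $\lambda$ commute in $\pi_{1}(E(K))$ and $\pi_{1}(K(p/q))=\pi_{1}(E(K))/\langle\langle\mu^{p}\lambda^{q}\rangle\rangle$. I will treat the $(n,0)$ case; the $(0,n)$ case follows by reversing the orientation of $K$, equivalently by replacing $\mu$ with $\mu^{-1}$ and the slope condition $p/q\ge n$ with $p/q\le -n$.

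First I would extract a relation from the disk. Let $\Phi\colon D^{2}\to S^{3}$ be the $(n,0)$-singular spanning disk, with $n$ positive interior intersection points with $K$. Removing from $D^{2}$ a collar of $\partial D^{2}$ together with a small disk around each preimage of an intersection point yields a planar surface $\Sigma$ (a disk with $n$ holes) whose image $\Phi(\Sigma)$ lies in $E(K)$. The outer boundary maps to a pushoff $\lambda_{D}$ of $K$ along the disk, while the $n$ inner boundaries map to meridians; since all intersections are positive, each inner boundary represents $\mu$ with a consistent orientation. Reading the standard presentation of $\pi_{1}$ of a planar surface (the outer boundary is the product of the inner boundaries) through $\Phi$ gives, for suitable $g_{1},\dots,g_{n}\in\pi_{1}(E(K))$,
\[
\lambda_{D}=\prod_{i=1}^{n} g_{i}\,\mu\,g_{i}^{-1}.
\]
A separate linking-number computation pins down the disk framing: as $\lambda_{D}$ is the pushoff of $K$ along an immersed disk whose only failure to be a Seifert surface is its $n$ positive punctures by $K$, one has $\mathrm{lk}(K,\lambda_{D})=n$, hence $\lambda_{D}=\lambda\mu^{n}$ on $\partial\nbhd(K)$.

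Next comes the algebra in $\pi_{1}(K(p/q))$, which yields both bounds. The hypothesis $p/q\ge n$ with $p\ge 0$ forces $q\ge 1$ and $p\ge nq$. Substituting $\lambda=\lambda_{D}\mu^{-n}$ into the surgery relation $\mu^{p}\lambda^{q}=1$ and using that $\mu,\lambda_{D}$ commute gives $\lambda_{D}^{\,q}=\mu^{\,nq-p}$, so with the disk relation and $p-nq\ge 0$,
\[
1=\mu^{\,p-nq}\lambda_{D}^{\,q}=\mu^{\,p-nq}\Bigl(\prod_{i=1}^{n} g_{i}\,\mu\,g_{i}^{-1}\Bigr)^{q},
\]
a product of exactly $(p-nq)+nq=p$ conjugates of $\mu$ (each factor of $\mu^{\,p-nq}$ being a trivial conjugate); thus the image of $\mu$ is a generalized torsion element of order \emph{at most} $p$. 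For the matching lower bound I would pass to homology: $H_{1}(K(p/q))\cong\mathbb{Z}/p$ is generated by the class of $\mu$, so a product of $k$ conjugates of $\mu$ maps to $k[\mu]$, whence any trivial such product has $k\equiv 0\pmod p$, and moreover $\mu\ne 1$ when $p\ge 2$. Therefore the length of any non-empty trivial product of conjugates of $\mu$ is a positive multiple of $p$, giving order \emph{at least} $p$; combined with the explicit length-$p$ relation, the order equals $p$.

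The step I expect to be the main obstacle is the geometric one: correctly organizing the immersed disk into the planar surface $\Sigma$, choosing the basepoint and connecting arcs so that the boundary relation takes precisely the stated product form, and fixing orientations so that the $n$ inner boundaries all contribute $\mu^{+1}$ and the disk framing is exactly $+n$. Everything downstream — the substitution, the counting of conjugates, and the homological bound — is routine once that relation and framing are in hand.
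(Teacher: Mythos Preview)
The paper does not supply its own proof of this theorem; it is quoted from \cite{IMT_filling} and used as a black box. Your argument is correct and is precisely the expected one: the punctured singular disk yields the relation $\lambda\mu^{n}=\prod_{i=1}^{n} g_{i}\mu g_{i}^{-1}$ in $\pi_{1}(E(K))$, substituting into the filling relation $\mu^{p}\lambda^{q}=1$ (with $q\ge 1$, $p\ge nq$) produces a trivial product of exactly $p$ conjugates of $\mu$, and the abelianization $H_{1}(K(p/q))\cong\mathbb{Z}/p\mathbb{Z}$ both shows $\mu\neq 1$ (for $p\ge 2$) and forces any such trivial product to have length a multiple of $p$.
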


A singular disk $D$ which has only clasp singularity as illustrated in Figure~\ref{clasp} is said to be 
\emph{coherent} if $K$ intersects $\mathrm{int}D$ in the same direction. 
A typical and fundamental example of $(n, 0)$-- or $(0, n)$--singular spanning disk is a \emph{coherent clasp disk}. 

\begin{figure}[h]
\includegraphics*[width=0.23\textwidth]{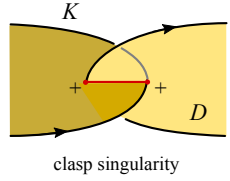}
\caption{Clasp disk}
\label{clasp}
\end{figure}

\medskip

\section{Proof of Theorem~\ref{g-torsion_larga_rank}} 

We denote the Heegaard genus of a $3$--manifold $M$ by $g(M)$. 
We prove Theorem~\ref{g-torsion_larga_rank}} by giving a family of concrete examples below. 

%\begin{theorem}
%\label{large_genus}
%There exist infinitely many hyperbolic, homology lens spaces $M$ such that 
%$g(M) = \mathrm{rank}\pi_1(M) = n$, 
%and $\pi_1(M)$ has a generalized torsion element. 
%Furthermore, the order of the generalized torsion element is arbitrarily large. 
%\end{theorem}
%
%\begin{proof}

Let $K_n$ be a Montesinos knot with rational tangles $R_1, \ldots, R_{n-1}, R_n$, 
where $R_i = [2, -2, 2]$ for $i = 1, \ldots, n-1$ and $R_n = [-6, 4]$.
Then $K_n$ is a Montesinos knot 
\[
M\left(\frac{\beta_1}{\alpha_1}, \ldots, \frac{\beta_{n-1}}{\alpha_{n-1}}, \frac{\beta_n}{\alpha_n}\right),\ 
\textrm{where}\ \frac{\beta_i}{\alpha_i} = \frac{5}{3}\ (1 \le i \le n-1),\ \textrm{and}\ \frac{\beta_n}{\alpha_n} = \frac{17}{6}.
\]
\begin{figure}[h]
\includegraphics*[width=0.6\textwidth]{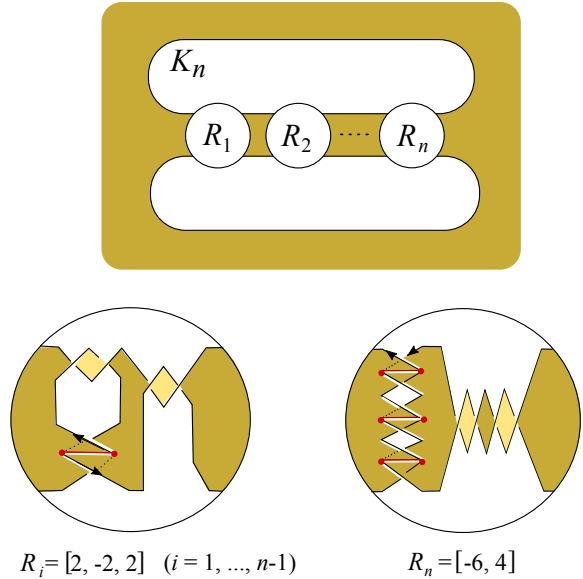}
\caption{$(2n + 4, 0)$--singular spanning disk for $K_n$}
\label{Montesinos_knot_g-torsion}
\end{figure}

We collect some desired properties of $K_n$ as several claims. 

\begin{claim}
\label{tunnel}
The tunnel number of $K_n$ is $n-1$, i.e.~$g(E(K_n)) = n$.
\end{claim}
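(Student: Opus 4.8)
Since the tunnel number satisfies $t(K_n)=g(E(K_n))-1$, it is enough to prove $g(E(K_n))=n$, and I would do this by establishing the bounds $g(E(K_n))\le n$ and $g(E(K_n))\ge n$ separately. The upper bound is the easy half. As none of $5/3,\dots,5/3,17/6$ is an integer, $K_n$ is a Montesinos knot of length $n$, and such a knot carries a standard system of $n-1$ unknotting tunnels: put $K_n$ in its usual position with the rational tangles $R_1,\dots,R_n$ arranged in a cyclic chain of balls $B_1,\dots,B_n$, and for $1\le i\le n-1$ take a short arc $\gamma_i$ joining the strands of $K_n$ in $B_i$ to those in $B_{i+1}$. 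One checks directly that $S^3\setminus N(K_n\cup\gamma_1\cup\dots\cup\gamma_{n-1})$ is a genus-$n$ handlebody (the arcs break the cyclic chain of tangles, and each rational-tangle complement with the tunnel adjoined is a handlebody), so $t(K_n)\le n-1$; this bound can also be quoted from the literature on Heegaard genus of Montesinos knot exteriors.

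For the lower bound, since the Heegaard genus of a compact $3$--manifold is at least the rank of its fundamental group, it suffices to prove $\operatorname{rank}\pi_1(E(K_n))\ge n$. This is where the choice of tangles matters: $R_i=[2,-2,2]$ gives $\beta_i/\alpha_i=5/3$ and $R_n=[-6,4]$ gives $\beta_n/\alpha_n=17/6$, so every $\alpha_i$ is divisible by $3$ while every $\beta_i$ is \emph{prime to} $3$. Hence in the double branched cover $W_n=\Sigma_2(K_n)$, a Seifert fibered space over $S^2$ with $n$ exceptional fibers of orders $\alpha_1,\dots,\alpha_n$, the standard presentation $\langle x_1,\dots,x_n,h\mid [x_i,h],\ x_i^{\alpha_i}h^{\beta_i},\ x_1\cdots x_n h^{b}\rangle$ reduces modulo $3$ to $\beta_i h=0$ (which kills $h$) together with $x_1+\dots+x_n=0$, so $\dim_{\mathbb F_3}H_1(W_n;\mathbb F_3)=n-1$. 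The covering involution of $W_n\to S^3$ acts by $-1$ on this group, and so the usual dihedral (Fox $3$--coloring) construction produces a surjection
\[
\rho\colon\ \pi_1(E(K_n))\ \twoheadrightarrow\ \Gamma_n:=(\mathbb Z/3)^{\,n-1}\rtimes\mathbb Z/2 ,
\]
where $\mathbb Z/2$ acts by inversion, a meridian of $K_n$ maps to an order-$2$ element in the non-trivial coset, and $\pi_1$ of the double cover of $E(K_n)$ maps onto the rotation subgroup $V=(\mathbb Z/3)^{n-1}$.

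It remains to see that $\operatorname{rank}\Gamma_n=n$. Suppose $\Gamma_n=\langle g_1,\dots,g_k\rangle$. Projecting to $\Gamma_n/V=\mathbb Z/2$, relabel so that $g_1,\dots,g_a$ lie in the non-trivial coset, say $g_j=(v_j,1)$ with $a\ge 1$, and $g_{a+1},\dots,g_k\in V$, say $g_j=(w_j,0)$. A short computation shows $\langle g_1,\dots,g_k\rangle\cap V$ is the $\mathbb F_3$--span of $v_1-v_2,\dots,v_1-v_a,\,w_{a+1},\dots,w_k$, which has dimension at most $k-1$; since this must equal $\dim_{\mathbb F_3}V=n-1$, we get $k\ge n$, and $k=n$ is visibly enough. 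Therefore $\operatorname{rank}\pi_1(E(K_n))\ge\operatorname{rank}\Gamma_n=n$, hence $g(E(K_n))\ge n$, and combining with the upper bound $g(E(K_n))=n$, i.e.\ $t(K_n)=n-1$. In passing this also gives $\operatorname{rank}\pi_1(E(K_n))=n$, which is what is needed for the rank of $\pi_1(M_n)$ later on.

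The tunnel system is routine, so the main obstacle is the lower bound, in two parts: (i) reading off the Seifert invariants of $\Sigma_2(K_n)$ from the tangle data carefully enough to be certain that $\dim_{\mathbb F_3}H_1(\Sigma_2(K_n);\mathbb F_3)$ is exactly $n-1$ — signs and the non-vanishing of the orbifold Euler number have to be tracked — and (ii) confirming that the metabelian group $\Gamma_n$ cannot be generated by fewer than $n$ elements, for which the generalized-dihedral count above is the key point. An alternative route for (i)–(ii) is to invoke known lower bounds for tunnel numbers of Montesinos knots directly.
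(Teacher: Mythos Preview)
Your argument is correct. The upper bound via the standard $(n-1)$--tunnel system for a length--$n$ Montesinos knot is fine, and for the lower bound your Fox $3$--coloring construction works: since every $\alpha_i\in\{3,6\}$ is divisible by $3$ and every $\beta_i\in\{5,17\}$ is a unit mod $3$, the Seifert presentation of $H_1(\Sigma_2(K_n);\mathbb F_3)$ collapses exactly as you say to $(\mathbb Z/3)^{n-1}$; the branched--cover involution acts by $-1$, so the metabelian quotient $\pi_1(E(K_n))\twoheadrightarrow(\mathbb Z/3)^{n-1}\rtimes\mathbb Z/2$ exists, and your Reidemeister--Schreier style count shows this generalized dihedral group needs $n$ generators. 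Hence $\operatorname{rank}\pi_1(E(K_n))\ge n$ and $g(E(K_n))=n$.

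The paper's proof is quite different in style: it is a one--line citation of Lustig--Moriah \cite{LM}, Theorem~0.1(1), which applies directly because $\gcd(\alpha_1,\dots,\alpha_n)=\gcd(3,\dots,3,6)=3\ne 1$. Their machinery (Nielsen equivalence and $\mathcal N$--torsion) gives both the tunnel number here and, in part~(3) of the same theorem, the equality $g(K_n(p/q))=\operatorname{rank}\pi_1(K_n(p/q))=n$ for $p$ even that is needed in the next claim. Your approach has the virtue of being self--contained and of making transparent \emph{why} the condition $3\mid\alpha_i$ matters (it produces many $3$--colorings), but note that your closing remark slightly overstates what it yields for later: knowing $\operatorname{rank}\pi_1(E(K_n))=n$ does not by itself give $\operatorname{rank}\pi_1(K_n(p/q))=n$, and to push your dihedral surjection through the surgery you would still need to check that $\mu^p\lambda^q$ dies in $\Gamma_n$ (this is where the parity of $p$ enters). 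The paper avoids this by citing \cite{LM} again.
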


\begin{proof}
Since $\mathrm{gcd}(\alpha_1, \ldots, \alpha_n) = 3 \ne 1$, 
Theorem~0.1(1) in \cite{LM} shows that the tunnel number of $K_n$ is $n-1$,
i.e.~$g(E(K_n)) = n$. 
\end{proof}

\begin{claim}
\label{rank}
Assume that $p$ is even. 
Then $g(K_n(p/q)) = \mathrm{rank}\, \pi_1(K_n(p/q)) = n$. 
\end{claim}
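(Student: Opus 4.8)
The plan is to sandwich the rank of $\pi_1(K_n(p/q))$ between $n$ from above, via an explicit Heegaard splitting, and $n$ from below, via a homological obstruction. For the upper bound $g(K_n(p/q)) \le n$, I would start from the genus $n$ Heegaard splitting of the exterior $E(K_n)$ provided by Claim~\ref{tunnel}: since $E(K_n)$ has a Heegaard surface of genus $n$, attaching a solid torus along any slope $p/q$ on $\partial E(K_n)$ can only decrease or preserve the genus, so $g(K_n(p/q)) \le g(E(K_n)) = n$. Because the rank of the fundamental group is always bounded above by the Heegaard genus, this already gives $\mathrm{rank}\,\pi_1(K_n(p/q)) \le g(K_n(p/q)) \le n$. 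The real content is therefore the lower bound: I must show $\mathrm{rank}\,\pi_1(K_n(p/q)) \ge n$, which will simultaneously force $g(K_n(p/q)) \ge n$ and hence equality throughout.

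For the lower bound I would use the fact that the rank of a group surjects onto the minimal number of generators of any quotient, and pass to a finite abelian quotient of $\pi_1(K_n(p/q))$ built from the tangle structure. The manifold $K_n(p/q)$ is a Seifert-fibered or graph manifold obtained from the Montesinos knot, and the summands $R_i = [2,-2,2]$ with $\beta_i/\alpha_i = 5/3$ each contribute a $\mathbb{Z}/3$ factor to a suitable abelianized quotient; concretely I would compute $H_1$ of the double branched cover, or better, exhibit a surjection $\pi_1(K_n(p/q)) \twoheadrightarrow (\mathbb{Z}/3)^{n-1} \times \mathbb{Z}/p$ (or a closely related group) coming from the $n$ rational tangles, where the evenness hypothesis on $p$ guarantees the last factor does not collapse against the $3$-torsion. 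Since $3$ is coprime to nothing relevant here and $p$ is even, $(\mathbb{Z}/3)^{n-1}\times\mathbb{Z}/p$ genuinely needs $n$ generators (the $n-1$ copies of $\mathbb{Z}/3$ are each $3$-torsion and the $\mathbb{Z}/p$ factor has even order, so no generator can be merged), whence $\mathrm{rank}\,\pi_1(K_n(p/q)) \ge n$.

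The main obstacle I anticipate is pinning down the precise quotient and verifying that it requires exactly $n$ generators rather than fewer. Writing down a presentation of $\pi_1(K_n(p/q))$ from the Montesinos description is routine, but one must be careful that the relation introduced by the $p/q$-filling, together with the gluing relations among the tangle pieces, does not secretly reduce the minimal generating set; this is exactly where the parity of $p$ enters and must be used. A clean way to sidestep delicate presentation manipulations is to work with $H_1$ of an appropriate cover (e.g.\ the $3$-fold cyclic cover dual to the $\gcd$ condition $\gcd(\alpha_i)=3$ used in Claim~\ref{tunnel}) so that the $n$ independent $3$-torsion or $p$-torsion classes become visible directly in first homology; then the rank lower bound follows from the standard fact that $\mathrm{rank}\,G \ge \mathrm{rank}\, H_1(G)$ applied after choosing the quotient so that $H_1$ has $n$ invariant factors of non-coprime order. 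Combining the two bounds yields $g(K_n(p/q)) = \mathrm{rank}\,\pi_1(K_n(p/q)) = n$.
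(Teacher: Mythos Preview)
Your upper bound is fine: a genus-$n$ Heegaard splitting of $E(K_n)$ fills to one of $K_n(p/q)$, and rank is bounded above by Heegaard genus. The gap is in the lower bound. Since $K_n(p/q)$ is $p/q$--surgery on a knot in $S^{3}$, its first homology is $H_1(K_n(p/q))\cong\mathbb{Z}/p$, which is \emph{cyclic}. Hence every abelian quotient of $\pi_1(K_n(p/q))$ is cyclic, and the surjection onto $(\mathbb{Z}/3)^{n-1}\times\mathbb{Z}/p$ that you propose simply cannot exist. (Even if it did, that group needs only $n-1$ generators when $\gcd(3,p)=1$, which ``$p$ even'' certainly allows.) Your fallback of passing to a cover does not rescue the argument either: a $3$--fold cyclic cover exists only when $3\mid p$, which is not assumed, and in any case Reidemeister--Schreier bounds on subgroup rank go the wrong way to produce a lower bound on $\mathrm{rank}\,\pi_1$ from $H_1$ of a cover.

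The paper does not attempt any homological argument. It invokes Theorem~0.1(3) of Lustig--Moriah \cite{LM}, whose hypotheses are exactly $\gcd(\alpha_1,\dots,\alpha_n)\ne 1$ (here $=3$) and $p$ even. Their method detects rank via Nielsen equivalence of generating systems using Fox derivatives over a suitable quotient ring (their ``$\mathcal{N}$--torsion''), an obstruction designed precisely to survive when homology is cyclic. That Lustig--Moriah obstruction is the missing ingredient; without it the inequality $\mathrm{rank}\,\pi_1(K_n(p/q))\ge n$ is unproved.
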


\begin{proof}
Since $p$ is even and $\mathrm{gcd}(\alpha_1, \ldots, \alpha_n) = 3 \ne 1$, 
we apply \cite[Theorem~0.1(3)]{LM} to obtain the desired result. 
\end{proof}

\begin{claim}
\label{g-torsion}
$\pi_1(K_n(p/q))$ has a generalized torsion element of order $p$ whenever $p/q \ge 2n-4$.
\end{claim}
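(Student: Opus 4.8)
The plan is to exhibit an explicit $(2n+4, 0)$--singular spanning disk for $K_n$ and then invoke Theorem~\ref{theorem:g_torsion_span_disk}. First I would analyze the rational tangles locally. Each tangle $R_i = [2, -2, 2]$ for $i = 1, \dots, n-1$ carries a coherent clasp disk contributing a fixed number of positive intersection points with $K_n$; a direct count on the standard diagram of the $[2,-2,2]$ tangle shows each such block contributes exactly $2$ positive intersections, accounting for $2(n-1)$ points in total. The remaining tangle $R_n = [-6,4]$ similarly admits a coherent clasp-type disk contributing $6$ more positive intersections. Assembling these local disks along the Conway sphere decomposition of the Montesinos knot, and checking that the gluing can be arranged so that all clasp singularities remain coherent and all signs agree, yields a coherent singular spanning disk $\Phi(D)$ for $K_n$ with exactly $2(n-1) + 6 = 2n + 4$ positive intersection points and no negative ones, i.e.\ a $(2n+4, 0)$--singular spanning disk. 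This is the content of Figure~\ref{Montesinos_knot_g-torsion}.

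With the disk in hand, Theorem~\ref{theorem:g_torsion_span_disk} applies directly: since $K_n$ has a $(2n+4, 0)$--singular spanning disk, the image of the meridian $\mu$ in $\pi_1(K_n(p/q))$ is a generalized torsion element of order $p$ whenever $\frac{p}{q} \ge 2n+4$. To reach the sharper bound $\frac{p}{q} \ge 2n - 4$ claimed here, I would look for a more efficient singular spanning disk, reducing the positive intersection number from $2n+4$ to $2n-4$ by eight. The natural source of such a reduction is cancellation: when two clasp disks from adjacent tangles are glued, a positive intersection from one can sometimes be cancelled against a negative intersection from the other, or an isotopy across a Conway sphere can remove pairs of intersection points. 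One expects to save $4$ points at each end (two clasps' worth) by re-choosing the spanning disks for $R_1$ and $R_n$ so they meet the gluing spheres more economically, for a total saving of $8$.

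The main obstacle will be the last step — producing the improved $(2n-4, 0)$--singular spanning disk and verifying rigorously that all intersections remain coherently signed after the cancellations. Coherence is fragile: an isotopy that removes an intersection pair can introduce a new clasp of the wrong sign, or break the "same direction" condition needed for Theorem~\ref{theorem:g_torsion_span_disk} to apply with the stated order $p$. I would handle this by working with an explicit diagram (as in Figure~\ref{Montesinos_knot_g-torsion}), tracking orientations band-by-band, and exhibiting the cancelling isotopies concretely rather than abstractly. Once the $(2n-4,0)$--singular spanning disk is verified, the conclusion is immediate from Theorem~\ref{theorem:g_torsion_span_disk}, and the order of the resulting generalized torsion element is exactly $p$, which can be taken arbitrarily large by choosing $p$ large with $q$ adjusted so that $p/q \ge 2n-4$ and $p$ even (the latter to keep Claim~\ref{rank} in force).
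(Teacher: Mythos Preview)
Your first paragraph is exactly the paper's argument: exhibit the $(2n+4,0)$--singular spanning disk from Figure~\ref{Montesinos_knot_g-torsion} (with the same count $2(n-1)+6=2n+4$) and apply Theorem~\ref{theorem:g_torsion_span_disk} to conclude for $p/q\ge 2n+4$. That is all the paper does.

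The second half of your proposal is chasing a typo. The paper's own proof of this claim ends with the bound $p/q\ge 2n+4$, not $2n-4$; the ``$2n-4$'' in the statement is simply a misprint. Your proposed improvement is therefore unnecessary, and in fact the mechanism you suggest cannot work as stated: in a $(2n+4,0)$--singular spanning disk there are no negative intersection points, so there is nothing to cancel a positive intersection against. Reducing the count by eight would require genuinely removing four positive clasps by isotopy, which would amount to showing $K_n$ bounds a $(2n-4,0)$--disk; for $n=2$ this would be a $(0,0)$--disk, i.e.\ an embedded disk, contradicting that $K_n$ is a nontrivial (indeed hyperbolic) knot. So the sharper bound is not merely unproved but false in general. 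Stop after the $(2n+4,0)$--disk and the appeal to Theorem~\ref{theorem:g_torsion_span_disk}; that matches the paper and suffices for the application, since in the proof of Theorem~\ref{g-torsion_larga_rank} one is free to choose $p$ large.
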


\begin{proof}
As shown in Figure~\ref{Montesinos_knot_g-torsion}, 
we have a singular spanning disk $\Phi \colon D \to S^3$ such that 
$\Phi(\partial D) = K_n$ and $K_n$ intersects $\Phi(\mathrm{int}D)$ positively in 
$2(n-1) + 6 = 2n + 4$ points, i.e.~ $K_n$ bounds a $(2n+4, 0)$--singular spanning disk. 
Hence, it follows from Theorem~\ref{theorem:g_torsion_span_disk} that 
$\pi_1(K_n(p/q))$ has a generalized torsion element of order $p$ whenever $p/q \ge 2n+4$. 
\end{proof}

\begin{claim}
\label{hyperbolic}
$K_n$ is a hyperbolic knot. 
\end{claim}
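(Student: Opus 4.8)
The plan is to read the hyperbolicity of $K_n$ off its Montesinos form, treating the generic range $n \geq 3$ separately from the base case $n = 2$.

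First suppose $n \geq 3$. Then $K_n = M(5/3,\dots,5/3,17/6)$ is a Montesinos knot of length $\geq 3$ whose rational tangles are all non-integral, with ramification indices $\alpha_1 = \dots = \alpha_{n-1} = 3$ and $\alpha_n = 6$. I would invoke the classification of Montesinos links (Oertel, Bonahon--Siebenmann; see also Boileau--Zieschang): a Montesinos link of length $\geq 3$ is hyperbolic unless its associated orbifold $S^2(\alpha_1,\dots,\alpha_r)$ is spherical or Euclidean, in which case it is a Seifert link or contains an essential torus; equivalently, the only non-hyperbolic examples occur when $\sum_i (1 - 1/\alpha_i) \leq 2$. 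For $K_n$ one has $\sum_i (1 - 1/\alpha_i) = \tfrac{2(n-1)}{3} + \tfrac{5}{6} = \tfrac{4n+1}{6} > 2$ precisely for $n \geq 3$, so $S^2(3,\dots,3,6)$ is hyperbolic and hence $K_n$ is hyperbolic.

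It remains to treat $n = 2$, where $K_2 = M(5/3,17/6)$ has length $2$. Such a knot is the numerator closure of a sum of two rational tangles, hence a $2$--bridge knot $\mathfrak{b}(p,q)$, and I would identify its fraction from $5/3 + 17/6 = 9/2$, obtaining $K_2 = \mathfrak{b}(9,2)$ (up to the usual equivalences and mirroring; equivalently, its double branched cover is the lens space $L(9,2)$). Since the only $2$--bridge knots that are torus knots are the $\mathfrak{b}(m,1)$, i.e.\ the $(2,m)$--torus knots, and $2 \not\equiv \pm 1 \pmod 9$, $K_2$ is a non-torus $2$--bridge knot, hence hyperbolic by the classification of $2$--bridge knots (Thurston; Hatcher--Thurston; Menasco). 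This covers all $n > 1$.

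The one point needing genuine care is the base case $n = 2$: one must confirm the reduction of $M(5/3,17/6)$ to a $2$--bridge knot with the stated fraction, and that the resulting knot is neither a torus knot nor a connected sum; for $n \geq 3$ the argument is a direct citation once the index tuple is checked not to be one of the finitely many exceptional ones. Alternatively, a uniform route is available: $K_n$ is prime ($2$--bridge for $n=2$, a length-$\geq 3$ Montesinos knot for $n\geq 3$, both of which are prime) and, presenting each tangle through a positive continued fraction ($5/3 = [1,1,2]$, $17/6 = [2,1,5]$), admits a reduced alternating diagram; then Menasco's theorem reduces the problem to showing $K_n$ is not a $(2,m)$--torus knot, which follows for $n \geq 3$ from $g(E(K_n)) = n > 2$ (Claim~\ref{tunnel}, since torus knots have tunnel number one) and for $n = 2$ from the $2$--bridge computation above.
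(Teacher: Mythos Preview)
Your treatment of $n \geq 3$ is correct and close in spirit to the paper's. The paper argues slightly differently: it first invokes Oertel to conclude that a Montesinos knot is either hyperbolic or a torus knot, and then for $n \geq 3$ uses the already-established tunnel number $t(K_n)=n-1\geq 2$ (Claim~\ref{tunnel}), together with the fact that torus knots have tunnel number one, to rule out the torus case. Your ``alternative uniform route'' via Menasco plus tunnel number is essentially this same argument with Menasco substituted for Oertel; your primary route via the orbifold Euler characteristic is a legitimate variant that avoids the tunnel-number input altogether.

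For $n=2$, however, there is a genuine error. It is true that a length-two Montesinos knot is two-bridge (its double branched cover is a Seifert fibration of $S^2$ with two exceptional fibers, hence a lens space), but the two-bridge fraction is \emph{not} obtained by adding the Montesinos parameters and reducing. Reducing $\tfrac{5}{3}+\tfrac{17}{6}=\tfrac{81}{18}$ to $\tfrac{9}{2}$ discards exactly the arithmetic that determines the lens space, and $K_2$ is certainly not the knot $\mathfrak{b}(9,2)$; your check ``$2\not\equiv\pm1\pmod 9$'' therefore says nothing about $K_2$. Since your alternative route defers the $n=2$ case to this same computation, it inherits the gap. The paper instead reads off the Conway form $C(2,-2,6,-7,1)$ of $K_2$ directly from the tangle data $R_1=[2,-2,2]$, $R_2=[-6,4]$ and then observes that, by the classification of two-bridge knots, this is not a two-bridge torus knot $T_{2,q}$.
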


\begin{proof}
Following  \cite{Oertel}, 
$K_n$ is either a hyperbolic knot or a torus knot. 
If $n \ge 3$, then the tunnel number of $K_n$ is $n - 1 \ge 2$ (Claim~\ref{tunnel}). 
Since any nontrivial torus knot has tunnel number one \cite{BRZ}, $K_n$ is a hyperbolic knot. 

When $n = 2$, the tunnel number of $K_n$ is one. 
However, it is easy to see that $K_n$ is a two-bridge knot 
$C(2, -2, 6, -7, 1)$ in the Conway notation. 
By the classification of two-bridge knots, 
$K_n$ cannot be a two-bridge torus knot $T_{2, q}$ for any odd integer $q$. 
(See also \cite{BS}.)
\end{proof}

\begin{claim}
\label{surgery_hyperbolic}
$K_n(p/q)$ is hyperbolic for all but finitely many $p/q \in \mathbb{Q}$.
In particular, if $n \ge 4$, 
then $K_n(p/q)$ is hyperbolic for all $p/q \in \mathbb{Q}$. 
\end{claim}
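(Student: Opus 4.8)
The plan is to prove Claim~\ref{surgery_hyperbolic} by appealing to Thurston's hyperbolic Dehn surgery theorem together with the list of exceptional surgeries on Montesinos knots. Since $K_n$ is hyperbolic by Claim~\ref{hyperbolic}, its exterior $E(K_n)$ admits a complete hyperbolic structure of finite volume, and Thurston's hyperbolic Dehn surgery theorem guarantees that $K_n(p/q)$ is hyperbolic for all but finitely many slopes $p/q \in \mathbb{Q}$. This immediately gives the first assertion. The content of the second assertion is that when $n \ge 4$ there are in fact \emph{no} exceptional slopes at all, so one must rule out every non-hyperbolic filling.

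For the stronger statement when $n \ge 4$, I would first recall that a non-hyperbolic Dehn filling of a hyperbolic knot exterior is either reducible, toroidal, a small Seifert fibered space, or has finite (including cyclic) fundamental group; by Geometrization these exhaust the possibilities. The key input is Claim~\ref{rank}: for even $p$ we have $\operatorname{rank}\pi_1(K_n(p/q)) = n \ge 4$, and more generally the Montesinos-knot structure forces a large rank lower bound. A reducible or finite-$\pi_1$ filling of a knot in $S^3$ would have cyclic fundamental group (reducible fillings of knot exteriors give lens spaces or $S^1\times S^2$-summands by the cyclic surgery / Gordon–Luecke type results), hence rank $\le 1$, contradicting rank $= n \ge 4$; a small Seifert fibered space over $S^2$ with three exceptional fibers has a two-generator fundamental group, again contradicting rank $\ge 3$. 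The remaining case is a toroidal filling: here I would invoke the classification of toroidal Dehn surgeries on Montesinos knots — by work on exceptional surgeries on Montesinos knots (e.g.\ the results bounding toroidal slopes for Montesinos knots with at least four tangles, going back to Oertel and refined by Wu), a Montesinos knot $M(\beta_1/\alpha_1,\ldots,\beta_k/\alpha_k)$ with $k \ge 4$ tangles and none of the $\beta_i/\alpha_i$ equal to a half-integer admits no toroidal (indeed no non-hyperbolic) surgery. Since our $K_n$ has $n \ge 4$ tangles with $\beta_i/\alpha_i = 5/3$ and $17/6$, none of which is a half-integer, this applies.

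Concretely, the steps in order are: (1) cite Thurston's hyperbolic Dehn surgery theorem against $E(K_n)$, which is hyperbolic by Claim~\ref{hyperbolic}, to get all-but-finitely-many; (2) for $n \ge 4$, observe via Claim~\ref{rank} (and the analogous rank computation of \cite{LM} for odd $p$, or directly from the fact that a knot in $S^3$ with tunnel number $n-1 \ge 3$ cannot have a cyclic- or two-generator-fundamental-group surgery) that no filling of $K_n$ can be reducible, Seifert fibered, or have finite fundamental group; (3) invoke the classification of exceptional (in particular toroidal) surgeries on Montesinos knots with at least four rational tangles none of which is a half-integer to rule out toroidal fillings; (4) conclude by Geometrization that every filling is hyperbolic. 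The main obstacle is step (3): one must correctly cite and apply the precise form of the classification of toroidal/exceptional surgeries on Montesinos knots so that the hypotheses (four or more tangles, no half-integral tangle, the specific continued-fraction values $5/3$ and $17/6$) are genuinely met, and one should double-check that the relevant theorem does not carve out a sporadic family containing our $K_n$. A secondary subtlety is handling odd $p$ uniformly with the even case when quoting the rank bound, but since rank is not actually needed once the toroidal case is settled by the Montesinos classification — that classification already asserts \emph{all} surgeries are hyperbolic — step (3) alone suffices and steps (2)'s rank argument is only a convenient cross-check.
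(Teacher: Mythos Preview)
Your core argument matches the paper exactly: Thurston's hyperbolic Dehn surgery theorem for the first assertion, and Wu's result on Dehn surgery on arborescent knots \cite{Wu} for the second. The paper's proof is simply those two citations with no further elaboration.

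Your rank-based detour in step~(2) is unnecessary and also shaky: the claim that a reducible filling of a knot in $S^{3}$ has cyclic fundamental group is not correct in general (a reducible filling would be a nontrivial connected sum, and its fundamental group a free product), and Claim~\ref{rank} is stated only for even $p$, so the odd-$p$ case is not covered. You correctly observe at the end that Wu's theorem already asserts that a Montesinos knot with at least four rational tangles (none a half-integer) admits \emph{no} exceptional surgeries, so step~(3) alone handles everything and step~(2) should simply be dropped. With that excision your proof is the paper's proof.
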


\begin{proof}
Since $K_n$ is hyperbolic (Claim~\ref{hyperbolic}), 
the first assertion follows from Thurston's hyperbolic Dehn surgery theorem \cite{T1,T2}.  
The second assertion follows from \cite{Wu}. 
\end{proof}

It follows from Claims~\ref{tunnel}, \ref{rank}, \ref{g-torsion} and \ref{surgery_hyperbolic} that 
for a given integer $n > 1$, 
we may choose an even integer $p$ so that 
a homology lens space $M_n = K_n(p/q)$ enjoys the required property. 

This completes a proof of Theorem~\ref{g-torsion_larga_rank}.
\hfill $\Box$

\medskip

\begin{remark}
Since $\pi_1(K_n(p/q))$ has a generalized torsion element, it is not bi-orderable.
\end{remark}

Currently, when we restrict to hyperbolic case, 
our technique of constructing a generalized torsion element based on singular clasp disk only can produce a non-integral homology sphere; a generalized torsion obtained by this method is a nontrivial torsion element of its homology.

\begin{question}
\label{question:hyperbolic_integral_sphere}
Does there exist a hyperbolic 3-manifold with torsion-free $1$st homology group 
whose fundamental group has arbitrarily large rank and 
generalized torsion? 
In particular, does there exist a hyperbolic integral homology sphere whose fundamental group has arbitrarily large rank and 
generalized torsion?
\end{question}

Recall that in our current knowledge, 
as for a bi-orderability of a knot group, the Alexander polynomial plays a fundamental role; 
when $K$ is fibered (more generally, when $[G(K),G(K)]$ is residually torsion-free-nilpotent) and all the roots of its Alexander polynomial $\Delta_K(t)$ is positive real, 
then $G(K)$ is bi-orderable, hence in particular, $G(K)$ has no generalized torsion. 

Conversely, when $K$ is rationally homologically fibered, 
i.e. $\deg \Delta_K(t)=2g(K)$,  
and $\Delta_K(t)$ has no positive real roots then $G(K)$ is not bi-orderable \cite{Ito_NYJM}. 
To prove non-bi-orderability, we usually construct a generalized torsion element in the meta-abelian quotient $G(K)\slash [[G(K),G(K)],[G(K),G(K)]]$, with a help of the Alexander polynomial, providing a clue to find a generalized torsion element of $G(K)$ itself.
Thus, from a bi-orderablity point of view, the following counterpart of Question \ref{question:hyperbolic_integral_sphere} is interesting.

%{\color{blue} ** Short version **}
%As for the bi-orderability of knot groups, the Alexander polynomial plays a fundamental role, often giving a clue to find a generalized torsion element in the knot group or in the meta-abelian quotient. Thus the following counterpart of Question \ref{question:hyperbolic_integral_sphere} is interesting.

\begin{question}
Does there exist a hyperbolic knot with trivial Alexander polynomial whose fundamental group has arbitrarily large rank and 
generalized torsion?
\end{question}
\bigskip

\end{document}